\newtheorem{corollary}{Corollary}
\newtheorem{prop}{Proposition}
\newtheorem{theorem}{Theorem}
\newtheorem{remark}{Remark} 
\newtheorem{definition}{Definition}
\newtheorem*{theorem*}{Theorem}
\newtheorem*{lemma*}{Lemma}
\newtheorem*{prop*}{Proposition}
\newtheorem*{corollary*}{Corollary}
\newtheorem*{remark*}{Remark} 
\newtheorem*{remarks*}{Remarks}
\def\f{\frac}
\newcommand{\esssup}{\mathop{\textup{ess.sup}}}
\newcommand{\vp}{\varphi}
\def\C{{\mathbb C}}
\def\R{{\mathbb R}}
\def\a{\alpha}
\def\s{\sigma}
\DeclareFontFamily{U}{mathx}{\hyphenchar\font45}
\DeclareFontShape{U}{mathx}{m}{n}{
	<5> <6> <7> <8> <9> <10>
	<10.95> <12> <14.4> <17.28> <20.74> <24.88>
	mathx10
}{}
\DeclareSymbolFont{mathx}{U}{mathx}{m}{n}
\DeclareMathAccent{\widecheck}{0}{mathx}{"71}
\DeclareMathAccent{\wideparen}{0}{mathx}{"75}
\begin{document} 
\title[On Families of maximal operators]{On Families between the Hardy-Littlewood and Spherical maximal functions}
\author[Georgios Dosidis and Loukas Grafakos]{Georgios Dosidis and Loukas Grafakos}

\newcommand{\Addresses}{{
		\bigskip
		\footnotesize
		
		\textsc{Georgios Dosidis.}
		\textsc{Department of Mathematics, University of Missouri,
			Columbia MO 65203}\par\nopagebreak
		\textit{E-mail address:} \texttt{ganq8f@mail.missouri.edu}
		
			\medskip
		\textsc{Loukas Grafakos.}
		\textsc{Department of Mathematics, University of Missouri,
			Columbia MO 65203}\par\nopagebreak
		\textit{E-mail address:} \texttt{grafakosl@missouri.edu}

}}
\thanks{The authors acknowledge the support of the Simons Foundation.}

\begin{abstract} We study a family of   maximal operators that provides a continuous link   connecting the   Hardy-Littlewood maximal function to the   spherical maximal function. Our theorems  are   proved
 in  the multilinear setting  but may contain 
new  results even in the linear case. For this family of operators we obtain  bounds between 
Lebesgue spaces in the optimal range of exponents. 
\end{abstract}
\maketitle

\section*{Introduction}
Spherical averages arise naturally in PDE but       $L^p$ bounds for maximal spherical averages
 were first obtained by Stein~\cite{S1976}, who showed that 
 the spherical maximal function 
\begin{equation}\label{SS}
  S (f) (x) := \sup_{t>0} \frac{1}{\omega_{n-1}}\int_{\mathbb{S}^{n-1}}|f(x-t \theta)| \, d\s_{n-1}(\theta)
\end{equation}
is bounded from $L^p(\R^n) $ to $L^p(\R^n) $ when    $p>\frac{n}{n-1}$ and $n\geq 3$ and     is unbounded  when $p\leq\frac{n}{n-1}$ and $n\geq 2$. The positive direction of this result was later extended to the case $n=2$ by Bourgain~\cite{B1986}. Here $d\sigma_{n-1}$ is the canonical surface measure on the sphere and $\omega_{n-1} $ is the measure of the entire unit sphere.  A number of other authors have also studied the spherical maximal function; see for instance \cite{C1985}, \cite{CM1979}, \cite{MSS1992}, and \cite{S1998}. Extensions of the spherical maximal function to different settings have also been   
considered; see \cite{C1979}, \cite{G1981}, \cite{DV1996}, and \cite{MSW2002}.  

The boundedness of  the maximal operator $  S$    in ~\cite{S1976} was obtain via the auxiliary  family of operators
\begin{equation}\label{Sa} 
S_\a (f)(x) = \sup_{t>0} \frac{2}{\omega_{n-1} B(\f n2,1-\a)} \int_{\mathbb{B}^{n}} |f(x-ty)| (1-|y|^2)^{-\a } dy ,
\end{equation}
defined originally for Schwartz functions, where $0\le  \a< 1$. Here $\mathbb{B}^{n}$ is the unit ball in $\R^n$,   $B$ is the beta function defined by $B(x,y) = \int_0^1t^{x-1} (1-t)^{y-1} dt$ for $x,y>0$. For each $0<\a<1$, Stein obtained boundedness for the operator $S_\a$ from $L^p$ to itself in the optimal range of exponents: $p>\frac{n}{n- \a}$, when $n\ge 3$.  This was extended to the case $n=2$ indirectly in~\cite{B1986} and more explicitly  in~\cite{MSS1992}.   In \cite{IS2002} the authors obtained boundedness results for maximal operators associated to a more general set of measures that includes the  family $S_\a$. 

Recall another classical averaging operator, the Hardy-Littlewood maximal  function 
\begin{equation*} 
M (f)(x) = \sup_{t>0} \frac{1}{v_n}\int_{\mathbb{B}^{n}} |f(x-ty)| dy.
\end{equation*}
Here $f$ is a locally integrable function on $\R^n$ and $v_n$ is the volume of $\mathbb{B}^n$.

The relationship between the aforementioned operators is as follows: The family $S_\a$ provides a continuous link that connects $M$ to $S$ in the  following explicit way:  
For any $f\in {L}_{loc}^1(\R^n) $ and any $x\in\R^n$ we have
\begin{align*}   & M(f) (x)\leq S_\a (f)(x) \leq S(f)(x), \\
& \lim\limits_{\a\to 1^-} S_\a (f)(x) = S(f)(x), \\
 &  \lim\limits_{\a\to 0^+}  S_\a (f)(x) = M(f)(x).
\end{align*}
These assertions are contained in Theorem~\ref{t1} and are proved in the next section.

In this paper, we denote by $d\sigma_{\kappa-1}$   the surface measure on unit sphere  $\mathbb{S}^{\kappa-1}$ in $\mathbb R^\kappa$, $v_{\kappa}$   the measure of the unit ball  in $\R^{\kappa}$ and $\omega_{\kappa-1} = d\sigma_{\kappa-1} ( \mathbb{S}^{\kappa-1})$ is  the total  
measure of $\mathbb{S}^{\kappa-1}$. Recall that 
$\kappa v_\kappa=\omega_{\kappa-1}$ for any   integer $\kappa\ge 2$.  We also use the notation 
$\mathbb B^\kappa$ for the unit ball in $\mathbb R^\kappa $ and $R\,\mathbb B^\kappa$ for the ball of radius $R>0$ 
centered at the origin in $\mathbb R^\kappa $. 
The space of Schwartz functions on $\mathbb R^\kappa $ is denoted by 
$\mathcal S( \mathbb R^\kappa )$. 

Our purpose is to study multilinear versions of $S$, $S_\a$ and of $M$. 
We define a  multi(sub)linear  maximal operator  as follows: 
\begin{equation}\label{MMa} 
M^m (f_1,\dots,f_m)(x) = \sup_{t>0} \frac{1}{v_{mn } }\int_{\mathbb{B}^{mn}}
 \prod_{i=1}^m |f_i(x-ty_i)| \, dy_1\cdots dy_m. 
\end{equation}
The uncentered version of this maximal operator first appeared in the work of Lerner,  Ombrosi, Perez, Torres, Trujillo-Gonzalez~\cite{LOPTT2009}  with the unit cube in place of the unit ball.
 Next, we introduce the    family of operators
\begin{equation}\label{MSa} 
S^m_\a (f_1,\dots,f_m)(x) =  
 \frac{2/ \omega_{ mn-1}  }{B(\f{mn}2,1-\a)} \sup_{t>0}  \int_{\mathbb{B}^{mn}} \prod_{i=1}^m |f_i(x-ty_i)|  \,
  \frac{dy}{ (1-|y|^2)^{ \a } } ,
\end{equation}
defined initially for functions $ f_i  \in  \mathcal{S}(\R^n) $ and $0\le \a<1$.   This is a multilinear extension 
of the operator $S_\a = S_\a^1$ introduced in \eqref{Sa}. 

We recall the definition of the  multilinear spherical maximal operator  
\begin{equation}\label{SSa} 
S^m  (f_1,\dots,f_m)(x) = \sup_{t>0} \frac{1}{\omega_{mn-1 } }\int_{\mathbb{S}^{mn-1}} \prod_{i=1}^m |f_i(x-t\theta_i)|  \,  d\sigma_{mn-1}(\theta_1,\dots , \theta_m), 
\end{equation}
given also for   functions $ f_i  \in  \mathcal{S}(\R^n) $. When $m=1$, $S^m$ reduces to $S$ in \eqref{SS}. 
The bilinear analogue of Stein's spherical maximal function (when m=2) was first introduced in \cite{GGIPS2013}  by Geba, Greenleaf, Iosevich, Palsson, and Sawyer  who obtained the first bounds for it but later improved bounds were provided  by \cite{BGHHO2018}, \cite{GHH2018}, \cite{HHY2019}, and \cite{JL2019}. A multilinear (non-maximal) version of this operator when all input functions lie in the same space $L^{p}(\R)$ was previously studied by Oberlin \cite{O1988}. The authors in \cite{BGHHO2018} provided an example that shows that the bilinear spherical maximal function is not bounded when $p\geq \frac{n}{2n-1}$. Last year Jeong and Lee in \cite{JL2019} proved that the bilinear maximal function is pointwise bounded by the product of the linear spherical maximal function and the Hardy-Littlewood maximal function, which helped them establish boundedness in the optimal open set of exponents, along with some endpoint estimates. These results were extended to the multilinear setting in~\cite{D2019}. Recently certain analogous bounds have been obtained by Anderson and Palsson  in \cite{AP20191},   \cite{AP20192} concerning a discrete version of the multilinear spherical maximal function.

We would like to extend the definitions of the    operators in \eqref{MSa} and \eqref{SSa} 
 to functions in $f_i $ in $ L^1 _{loc} (\R^n) $. Fix $f_i $ in $ L^1 _{loc} (\R^n) $ and $x\in \R^n$;    then    
\begin{equation}\label{SSa2} 
t\mapsto F(t)= t^{mn-1} \int_{\mathbb{S}^{mn-1}} \prod_{i=1}^m |f_i(x-t\theta_i)|  
d\sigma_{mn-1}(\theta_1,\dots , \theta_m)
\end{equation}
is integrable over any   interval $[0,L]$, which implies that the   integrals    
in \eqref{SSa} are finite for almost all $t>0$.  
Likewise, if $F$ is as in \eqref{SSa2} and $t\in (0,L)$, then 
\begin{equation}\label{MSa2} 
  \int_{\mathbb{B}^{mn}} \prod_{i=1}^m |f_i(x-ty_i)|   \frac{dy}{ (1-|y|^2)^{ \a } } = \int_0^1 \f{ F(tr)}{ (1-r^2)^{ \a} }\f{dr}{  t^{mn-1}  } \le \f{1}{  t^{mn-\a}  } \int_0^L \f{ F(s)\,ds}{ (t-s)^{ \a} },
\end{equation} 
and the last integral is the convolution (evaluated at $t$) 
of the $L^1$ functions $F\chi_{[0,L]} $ and $s^{-\a} \chi_{(0,L]}$ on the real line, hence it is finite  
a.e. on $  (0,L)$. We conclude that 
the integral in \eqref{MSa} is finite for almost all $t>0$  for $f_i\in L^1_{loc}(\R^n)$ and $x\in \R^n$.

Now, one cannot properly define the supremum of a family $\{A_t\}_{t>0}$ ($A_t\ge 0$) which satisfies $A_t<\infty$ 
 for almost all $t>0$. But it 
is possible to define   the essential supremum of $\{A_t\}_{t>0}$, which  is  practically the supremum restricted over the subset 
of $(0,\infty)$  { of full measure}. So to extend the definitions of the operators in 
\eqref{SSa} and \eqref{MSa} to functions $f_i\in L^1_{loc} (\R^n)$ for any $x\in \R^n$ by     replacing the supremum in these 
expressions by the essential supremum $\esssup$.  However, this adjustment is not needed when $f_i\in L^{p_i}(\R^n)$ with $\sum_{i=1}^n \frac{1}{p_i} = \frac{1}{p} < \frac{mn-\a}{n}$, since, in that case, the corresponding averages vary continuously in $t$. See Corollary~\ref{C1} below. 
Based on this discussion we provide the following definition.

\begin{definition} \label{Def1}
Let  $t>0$,  $f_i\in L^1_{loc}(\R^n)$ for $1\le i\le m$, and $x\in \R^n$.  We define 
\begin{equation}\label{Smat}
S^m_{\a,t}(f_1,\dots,f_m)(x) = \frac{  {2}/{\omega_{mn-1}}}{B(mn/2,1-\a)} \int_{\mathbb{B}^{mn}} \prod_{i=1}^m  f_i(x-ty_i) \,   \frac{dy}{ (1-|y|^2)^{ \a } }  
\end{equation}
and
\begin{equation}\label{ess1}
S^m_\a (f_1,\dots,f_m)(x) =   \esssup_{t>0}  S^m_{\a,t}(| f_1|,\dots, |f_m|)(x)
\end{equation}
for $0\le \a<1$. We also define 
\begin{equation}\label{Smat2}
S^m_{1,t}  (f_1,\dots,f_m)(x) =  \frac{1}{\omega_{mn-1 } }\int_{\mathbb{S}^{mn-1}} 
\prod_{i=1}^m  f_i(x-t\theta_i)   \,   d\sigma_{mn-1}(\theta_1,\dots , \theta_m)   
\end{equation}
and 
\begin{equation}\label{ess2}
S^m  (f_1,\dots,f_m)(x) = \esssup_{t>0} S^m_{1,t}  (|f_1|,\dots,|f_m|)(x) . 
\end{equation}
\end{definition}

In this paper we prove the following results:

\begin{theorem}\label{t1} 
Let $0<\a<1$. Given   $f_i\in  L^1 _{loc} (\R^n) $   and   $x\in\R^n$ we have
\begin{align} 
 & M^m (f_1,\dots,f_m)(x)\leq S^m_\a (f_1,\dots,f_m)(x)\leq S^m  (f_1,\dots,f_m)(x)  \label{lim}\\
& \lim\limits_{\a\to 1^-} S^m_\a (f_1,\dots,f_m)(x) = S^m  (f_1,\dots,f_m)(x) . \label{lim2}
 \end{align}
  { These statements are valid even when some of the preceding  expressions 
   equal   $\infty$. Moreover, if if $ S^m_{\a_0} (f_1,\dots,f_m)(x)<\infty$ 
for some $\alpha_0\in (0,1)$ then 
   \begin{equation} 
    \lim\limits_{\a\to 0^+}  S^m_\a (f_1,\dots,f_m)(x)= M^m  (f_1,\dots,f_m)(x) . \label{lim3} 
 \end{equation} 
 }  
\end{theorem}

\begin{remark} To see that condition $ S^m_{\a_0} (f_1,\dots,f_m)(x)<\infty$ 
	for some $\alpha_0\in (0,1)$ is necessary in order for \eqref{lim3} to hold, consider the function 
	\[  
	f(y) = \f{1}{  1-|y|^2  } \f{ 1}{\big( \log \f{1}{  1-|y|^2}\big)^2  }  \chi_{|y|<1} .
	\]
For this function the property $\lim_{\alpha\to 0} S_\alpha f(y) = Mf(y)$ fails at $y=0$, since for all $a>0$ we have that $S_\a f(0) =\infty$ while $Mf(0)<\infty$. 
\end{remark}

As $M^m$ is pointwise controlled by the product of the Hardy-Littlewood operators acting on each function, this 
operator is bounded from $L^{p_1}(\R^n)\!\times \dots \times\! L^{p_m}(\R^n)$ to $L^p(\R^n)$ in the full 
range of exponents $1<p_1,\dots , p_m\le \infty$ and $1/p_1+\cdots +1/p_m=1/p$. 
Boundedness for $S^m$ holds in the smaller region $ {n}/{(mn- 1)}<p\leq \infty$ as shown in~\cite{D2019}. So it is 
expected that $S^m_\a$ are bounded in some intermediate regions. This is the content of the following result.  

\begin{theorem}\label{t2} Let $n\geq 2$, $0\le \a<1$, and $1< p_i\leq \infty$. Define $p$  by $\sum_{i=1}^m \frac{1}{p_i} = \frac{1}{p}$. Then there is a constant $ C =C(m,\a,p_1,\dots,p_m)$ such that
\begin{equation}\label{B} 
\|S^m_\a (f_1,\dots,f_m) \|_{L^p(\R^n)}\leq C \prod_{i=1}^m \|f_i\|_{L^{p _i}(\R^n)} 
\end{equation}
for all  $f_i\in L^{p_i} (\R^n)$  if and only if  
$$
\frac{n}{mn- \a}<p\leq \infty. 
$$ 
Moreover, if \eqref{B} holds, then the constant $C$ can be chosen to be independent of the dimension 
(as indicated by the parameters on which it is claimed to depend).   
\end{theorem}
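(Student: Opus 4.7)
My plan is to handle the necessity and sufficiency separately, with the sufficiency resting on a pointwise factorization combined with a multilinear interpolation argument.

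For necessity, I would test $S^m_\alpha$ on $f_i = \chi_{\mathbb{B}^n}$ and analyze the behavior for $|x|$ large. The scale that maximizes the integrand is $t \approx \sqrt{m}\,|x|$, which drives $|y|^2 = \sum_i |y_i|^2$ close to $1$ precisely where each $x - t y_i \in \mathbb{B}^n$; a direct volume and singular-factor computation gives
\[
S^m_\alpha(f_1,\dots,f_m)(x)\ \gtrsim\ |x|^{-(mn-\alpha)}
\]
on a suitable annular region. Integrating in $x$ forces $p(mn-\alpha) > n$, i.e.\ $p > n/(mn-\alpha)$. The auxiliary constraints $p_i > 1$ come from freezing all but one of the $f_i$'s to be the characteristic function of a very large ball (approximating a constant) and invoking the failure of the linear Hardy--Littlewood maximal function on $L^1$.

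For sufficiency, I first establish a pointwise factorization. Pick any $i_0 \in \{1,\dots,m\}$ and, in the definition of $S^m_{\alpha,t}$, separate the variable $y_{i_0}$ by rescaling $y_{i_0} = R z$ where $R = (1 - \sum_{i\ne i_0} |y_i|^2)^{1/2}$. Then $(1-|y|^2)^{-\alpha} = R^{-2\alpha}(1-|z|^2)^{-\alpha}$, the $z$-integral produces a constant multiple of $S^1_{\alpha, tR} f_{i_0}(x)$, and the leftover factor $R^{n - 2\alpha} \le 1$ (since $n \ge 2 > 2\alpha$) is harmless. Taking the essential supremum in $t$, bounding $S^1_{\alpha, tR} f_{i_0}(x) \le S^1_\alpha f_{i_0}(x)$, and recognizing what remains as the $(m-1)$-linear Hardy--Littlewood operator gives
\[
S^m_\alpha(f_1,\dots,f_m)(x)\ \le\ C(m, n, \alpha)\ S^1_\alpha f_{i_0}(x)\ M^{m-1}\!\big(\{f_i\}_{i\ne i_0}\big)(x).
\]
By H\"older together with Stein's bound on $S^1_\alpha$ (valid for $p_{i_0} > n/(n-\alpha)$, with $n = 2$ covered by~\cite{B1986}) and the full-range bound for $M^{m-1}$, this gives boundedness of $S^m_\alpha$ on the slab $\Omega_{i_0} = \{(1/p_j) \in (0,1)^m : 1/p_{i_0} < (n-\alpha)/n\}$.

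The second step is multilinear interpolation. A short support-function calculation shows that the convex hull of $\bigcup_{i_0=1}^m \Omega_{i_0}$ equals
\[
\big\{(1/p_i) \in (0,1)^m : \textstyle\sum_i 1/p_i < (mn-\alpha)/n\big\},
\]
because the extremal corners---having the value $(n-\alpha)/n$ in one slot and $1$ in the others---all lie on the hyperplane $\sum_j x_j = (mn-\alpha)/n$. Multilinear Riesz--Thorin interpolation applied to the multisublinear operator $S^m_\alpha$ then delivers \eqref{B} on the full claimed range.

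The main obstacle is that the pointwise factorization alone misses precisely the regime where \emph{every} $p_i$ lies in $(1, n/(n-\alpha)]$, since no single slab $\Omega_{i_0}$ contains such a tuple; the multilinear interpolation step is therefore essential, not cosmetic. The dimension-free claim is a separate hurdle, because the Beta/Gamma ratio produced by the pointwise reduction behaves like $(m-1)^{n/2}e^{n/2}$ as $n \to \infty$ and so cannot directly give an $n$-independent constant; I would instead appeal to the dimension-free bounds of Bourgain for $M$ and of Stein for $S^1_\alpha$ in an appropriate range, inserting these into the factorization and verifying that the interpolation constants depend only on $m$, $\alpha$, and the exponents $p_1, \dots, p_m$.
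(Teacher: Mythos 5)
Your outline is essentially the paper's: the same extremizing example ($f_i=\chi_{\mathbb{B}^n}$, $t=\sqrt m\,|x|$, decay $|x|^{-(mn-\alpha)}$) for necessity, the same slicing of one variable giving a pointwise bound by $S_\alpha (f_{i_0})$ times an $(m-1)$-linear Hardy--Littlewood maximal function, H\"older plus the linear theory (Stein, with $n=2$ treated separately), and then multilinear interpolation over $\{\sum_i 1/p_i<(mn-\alpha)/n\}$. The genuine gap is in the ``moreover'' (dimension-free) clause. By discarding the factor $R^{\,n-2\alpha}$ through $R^{\,n-2\alpha}\le 1$, you place the constant $c_{mn,\alpha}\,c_{n,\alpha}^{-1}\,v_{(m-1)n}$, which is comparable to $((m-1)e)^{n/2}$, into the pointwise inequality itself; you correctly observe this growth, but your proposed remedy (quoting dimension-free operator norms for $M$ and $S_\alpha$) cannot remove a factor that sits in the pointwise bound before any operator norms are invoked. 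The paper's fix is precisely not to throw the weight away: it keeps $\phi(\hat y)=(1-|\hat y|^2)_+^{n/2-\alpha}$ as an integrable radially decreasing majorant, applies the approximate-identity domination \eqref{maj}, and verifies the exact cancellation $c_{mn,\alpha}\,c_{n,\alpha}^{-1}\,\|\phi\|_{L^1}=1$, so \eqref{BB} and hence \eqref{ptwise} hold with constant exactly $1$; dimension independence then enters only through $S_\alpha\le S_1$ and the dimension-free bounds of \cite{SS1983}. (Also, Bourgain's dimension-free results concern general convex bodies and $p>3/2$; for Euclidean balls the bounds needed here are already in \cite{SS1983}.)

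The second issue is the interpolation tool. $S^m_\alpha$ is only sublinear in each entry and the target exponent $p$ drops below $1$ as soon as $m\ge 2$, so multilinear Riesz--Thorin does not apply as stated: its standard form requires genuine multilinearity and a normed target. You would need either a linearization (a measurable choice $t(x)$, interpolation of the resulting multilinear family with bounds uniform in the choice, then passage to the supremum) or, as the paper does, the multilinear Marcinkiewicz interpolation theorem of \cite{GLLZ2012}, a real-method result designed for sublinear operators and quasi-Banach targets. Your endpoint configuration (interior points of the slabs, where strong bounds hold) is fine, and it even spares you the weak-type estimates at the $p_i=1$ vertices that the paper's vertex-based scheme uses; note, however, that your region lies in $(0,1)^m$ and so omits the faces where some $p_j=\infty$, which the theorem allows --- these follow directly from \eqref{Int99} by placing the $L^\infty$ entries in the slots indexed by $i\neq k$. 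Finally, the claim that $p_i>1$ is necessary is not part of the statement (it is a hypothesis) and can simply be dropped.
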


 We graph the range of boundedness for the bilinear operator $S_\a^2$.

\newcommand{\xx}{3.5}
\newcommand{\yy}{3.5}
\begin{figure}[h]
	\begin{center}
		\begin{tikzpicture}
			\coordinate (O) at (0,0);
			\coordinate (A) at (\xx,0);
			\coordinate (B) at (0,\yy);
			\coordinate (C) at (\xx,\yy);
			\coordinate (s1) at (\xx,0.62*\yy);
			\coordinate (s2) at (0.62*\xx,\yy);
			\coordinate (a1) at (\xx,0.83*\yy);
			\coordinate (a2) at (0.83*\xx,\yy);
			\coordinate (X) at (1.28*\xx,0);
			\coordinate (Y) at (0,1.28*\yy);

			\draw[white,fill=yellow!16] (O) -- (A) -- (a1) -- (a2) -- (B) -- (O) -- cycle;
			\draw[white,fill=yellow!02.5] (a1) -- (C) -- (a2) -- (a1) -- cycle;

			\draw[blue] (O) -- (A);
			\draw[blue] (O) -- (B);
			\draw[blue] (A) -- (a1);
			\draw[blue] (B) -- (a2);
			\draw[red!80] (a1) -- (C);
			\draw[red!80] (a2) -- (C);
			\draw[black,densely dotted] (s1) -- (s2);
			\draw[red!80] (a1) -- (a2);

			\foreach \rr in {a1,a2} {\filldraw[red](\rr) circle(1pt);}
			\foreach \rr in {O, A, B} {\filldraw[blue](\rr) circle(1pt);}
			\foreach \rr in {s1,s2} {\filldraw[black](\rr) circle(1pt);}
			\node [below  =1mm of A]  {(1,0)};
			\node [left  =1mm of B]  {(0,1)};
			\node [right =1mm of s1]  {$(\frac{n-1}{n} , 1)$};
			\node [right=1mm of a1]  {$(\frac{n-\a}{n}, 1)$};
			\node [below left = 0.5mm of O]  {(0,0)};

			\node [above  =1mm of X]  {$1/p_1$};
			\node [left =1mm of Y] {$1/p_2$};
			\draw[->,black,densely dotted] (A) -- (X);
			\draw[->,black,densely dotted] (B) -- (Y);

		\end{tikzpicture}
		\caption[Figure 1]{Range of $L^{p_1}\times L^{p_2}\to L^p$ boundedness of $S^2_\a$ when $0\leq\a\leq~1$ and $n\geq 2$. The bilinear spherical maximal function is bounded below the black dotted line, while the bilinear Hardy-Littlewood maximal function is bounded on the entire square. }\label{F1}
	\end{center}
\end{figure}


\begin{remark}\label{r1}
As a consequence, we obtain dimensionless $L^{p_1}\times\cdots\times L^{p_m}\to L^{p}$ bounds for the multilinear maximal function $M^m$ for all $\f1m <p \leq \infty$; this extents the   result of Stein and Str\"omberg~\cite{SS1983} to the multilinear setting. 
\end{remark}

The estimates in \eqref{B} imply that when  $f_i\in L^{p_i}(\R^n)$ with $\sum_{i=1}^n \frac{1}{p_i} = \frac{1}{p} < \frac{mn-\a}{n}$, then  for almost all $x\in\R^n$, $S^m_{\a,t}(f_1,\dots,f_m)(x)$ 
are finite  uniformly in $t>0$. 

\begin{corollary}\label{C1} 
Let  $0\le \a\le 1$ and suppose that for all $1\le i\le m$,  $f_i\in L^{p_i}(\R^n)$ where $1<p_i \le \infty$  satisfy  $\sum_{i=1}^m \frac{1}{p_i} = \frac{1}{p} < \frac{mn-\a}{n}$. Then for almost every $x$ in $\R^n$,  the function $t\mapsto S^m_{\a,t}(f_1,\dots,f_m)(x)$ is well defined and continuous in $t\in(0,\infty)$. Therefore   in Definition~\ref{Def1}, for almost all $x\in \R^n $,  we can replace 
the essential supremum    by  a supremum in both \eqref{ess1} and \eqref{ess2}.
\end{corollary}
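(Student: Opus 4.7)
The plan is to reduce the continuity of $t\mapsto S^m_{\a,t}(f_1,\dots,f_m)(x)$ to the case of smooth functions by approximation, using the maximal inequality of Theorem~\ref{t2} (together with the analogous $L^p$ bound for $S^m$ at $\a=1$) as the quantitative tool, and then to observe that essential suprema of continuous functions are genuine suprema.

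First I would dispose of the smooth case: for $g_i\in C_c^\infty(\R^n)$, continuity of $t\mapsto S^m_{\a,t}(g_1,\dots,g_m)(x)$ at every $x$ follows at once from dominated convergence, since the integrand $\prod_i g_i(x-ty_i)(1-|y|^2)^{-\a}$ is pointwise continuous in $t$ and uniformly dominated on $\mathbb B^{mn}$ by the integrable function $(\prod_i\|g_i\|_\infty)(1-|y|^2)^{-\a}$ (the case $\a=1$ is identical with surface measure on $\mathbb S^{mn-1}$). Next, for general $f_i\in L^{p_i}$, I would fix a compact interval $[a,b]\subset(0,\infty)$ and a radius $R>0$; it suffices to prove continuity on $[a,b]$ for almost every $x\in R\,\mathbb B^n$, since $(0,\infty)\times\R^n$ is exhausted by countably many such slabs. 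Replacing each $f_i$ by its truncation $\tilde f_i=f_i\chi_{(R+b)\,\mathbb B^n}$ does not alter the value of $S^m_{\a,t}(f_1,\dots,f_m)(x)$ for $|x|\le R$ and $t\in[a,b]$, and the truncated $\tilde f_i$ belongs to $L^{q_i}(\R^n)$ for any finite $q_i\ge p_i$. I would then choose exponents $q_i\ge p_i$ with $\sum_i 1/q_i<(mn-\a)/n$, which is possible by the strict inequality in the hypothesis.

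I would approximate $\tilde f_i$ by $g_i^{(k)}\in C_c^\infty(\R^n)$ in $L^{q_i}$ and apply the multilinear telescoping identity $\prod_i\tilde f_i-\prod_i g_i^{(k)}=\sum_{j=1}^m g_1^{(k)}\cdots g_{j-1}^{(k)}(\tilde f_j-g_j^{(k)})\tilde f_{j+1}\cdots\tilde f_m$ to produce the pointwise bound
\[
|S^m_{\a,t}(\tilde f_1,\dots,\tilde f_m)(x)-S^m_{\a,t}(g_1^{(k)},\dots,g_m^{(k)})(x)|\le \sum_{j=1}^m S^m_{\a,t}(|g_1^{(k)}|,\dots,|\tilde f_j-g_j^{(k)}|,\dots,|\tilde f_m|)(x)
\]
valid for every $t>0$. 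Taking $\esssup_{t>0}$ and applying Theorem~\ref{t2} at the exponents $q_1,\dots,q_m$ (with $1/q=\sum_i 1/q_i$) shows that the $L^q$-norm of this error tends to zero as $k\to\infty$; along an almost-everywhere convergent subsequence, $\esssup_{t>0}|S^m_{\a,t}(\tilde f)(x)-S^m_{\a,t}(g^{(k)})(x)|\to 0$ for a.e.\ $x$. Uniform-in-$t$ convergence of continuous functions produces a continuous limit, giving continuity of $t\mapsto S^m_{\a,t}(\tilde f)(x)=S^m_{\a,t}(f_1,\dots,f_m)(x)$ on $[a,b]$ for the relevant $x$. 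The last assertion of the corollary is then automatic, since every non-empty superlevel set of a continuous non-negative function on $(0,\infty)$ is open and thus has positive measure, so its essential supremum coincides with its supremum.

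The hard part will be the case $p_i=\infty$, in which Schwartz functions are not dense in $L^{p_i}$. The truncation step is the device that lets me bypass this obstacle by trading approximation in $L^\infty$ for approximation in a larger but still finite $L^{q_i}$; preserving the range condition of Theorem~\ref{t2} under this trade is precisely where the strict inequality $\sum_i 1/p_i<(mn-\a)/n$ in the hypothesis of Corollary~\ref{C1} is consumed.
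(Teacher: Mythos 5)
Your overall strategy is exactly the paper's: truncate to a compactly supported function (which does not change $S^m_{\a,t}$ for $x$ in a fixed ball and $t$ in a bounded range), approximate by smooth compactly supported functions, use the telescoping identity \eqref{ab} together with the maximal bound of Theorem~\ref{t2} (and the bound of \cite{D2019} when $\a=1$) to get, along a subsequence, a.e.\ convergence of $\esssup_{t>0}$ of the error to zero, hence uniform-in-$t$ convergence of continuous functions, and finally the observation that for a continuous function of $t$ the essential supremum is the supremum.

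There is, however, a concrete error in the exponent trade, and as written the key approximation step fails. You claim that the truncation $\tilde f_i=f_i\chi_{(R+b)\mathbb B^n}$ lies in $L^{q_i}$ for any finite $q_i\ge p_i$; this is false (compact support gives \emph{lower}, not higher, integrability: e.g.\ $|x|^{-n/q}\chi_{|x|\le 1}\in L^{p}$ for $p<q$ but not in $L^q$), and for $p_i=\infty$ there is no finite $q_i\ge p_i$ at all, so your prescription does not even address the $L^\infty$ case it was designed for. Since the telescoping terms contain the untouched factors $\tilde f_{j+1},\dots,\tilde f_m$, which you must measure in $L^{q_{j+1}},\dots,L^{q_m}$, the application of Theorem~\ref{t2} at the exponents $q_i$ genuinely breaks down. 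The repair is the paper's choice, with the inequality reversed: take $1<q_i<p_i$ when $p_i<\infty$ and $q_i$ finite but large when $p_i=\infty$; then H\"older on the bounded support gives $\tilde f_i\in L^{q_i}$, smooth compactly supported functions are dense there, and the strict hypothesis $\sum_i 1/p_i<\frac{mn-\a}{n}$ is what lets you choose the $q_i$ close enough to (or large enough, in the infinite case) so that $\sum_i 1/q_i<\frac{mn-\a}{n}$ still holds. Note that under your stated choice $q_i\ge p_i$ the range condition would be automatic, so your closing remark that the strict inequality is ``consumed'' by the trade is only accurate for the corrected choice $q_i<p_i$. With this fix the rest of your argument goes through and coincides with the paper's proof.
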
 


\begin{corollary}\label{C2} 
Let  $0\le \a\le 1$ and suppose that for all $1\le i\le m$,  $f_i\in L^{p_i}_{loc}(\R^n)$ where $1<p_i \le \infty$  satisfy  $\sum_{i=1}^m \frac{1}{p_i} = \frac{1}{p} < \frac{mn-\a}{n}$. Then for almost every $x\in\R^n$, 
\begin{equation}\label{99} 
\lim_{t\to 0} S^m_{\a,t}(f_1,\dots,f_m)(x) = f_1(x)\cdots f_m(x). 
	\end{equation}
\end{corollary}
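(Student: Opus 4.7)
The plan is to run the standard density--plus--maximal-inequality argument adapted to the multilinear setting. Since the conclusion is local and the statement concerns $t\to 0$, I first reduce to globally integrable data: fix a large ball $B\subset \R^n$ and replace each $f_i$ by $\widetilde f_i = f_i \chi_{2B}$. Since $f_i\in L^{p_i}_{loc}$, we have $\widetilde f_i\in L^{p_i}(\R^n)$, and for $x\in B$ and $t$ sufficiently small the integrands defining $S^m_{\a,t}(f_1,\dots,f_m)(x)$ and $S^m_{\a,t}(\widetilde f_1,\dots,\widetilde f_m)(x)$ coincide. It therefore suffices to prove \eqref{99} under the assumption $f_i\in L^{p_i}(\R^n)$.

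Next I handle the continuous case. If each $g_i$ lies in $C_c(\R^n)$, then $(y_1,\dots,y_m)\mapsto\prod_i g_i(x-ty_i)$ converges uniformly in $y\in \mathbb B^{mn}$ to $\prod_i g_i(x)$ as $t\to 0$. Because the weight $(2/\omega_{mn-1})B(mn/2,1-\a)^{-1}(1-|y|^2)^{-\a}$ has total mass $1$ on $\mathbb B^{mn}$ (this being the normalization chosen in \eqref{MSa}), we obtain $S^m_{\a,t}(g_1,\dots,g_m)(x)\to g_1(x)\cdots g_m(x)$ for every $x\in\R^n$.

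For general $f_i\in L^{p_i}$, pick $g_i\in C_c(\R^n)$ with $\|f_i-g_i\|_{L^{p_i}}<\d$ and telescope
\[
\prod_{i=1}^m f_i(x-ty_i)-\prod_{i=1}^m g_i(x-ty_i)=\sum_{k=1}^m \Bigl(\prod_{i<k}g_i(x-ty_i)\Bigr)(f_k-g_k)(x-ty_k)\Bigl(\prod_{i>k}f_i(x-ty_i)\Bigr).
\]
Sublinearity and monotonicity of $S^m_{\a,t}$ in each slot (after inserting absolute values) give the pointwise bound
\[
\bigl|S^m_{\a,t}(f_1,\dots,f_m)(x)-S^m_{\a,t}(g_1,\dots,g_m)(x)\bigr|\le \sum_{k=1}^m S^m_\a\bigl(|g_1|,\dots,|f_k-g_k|,\dots,|f_m|\bigr)(x),
\]
uniformly in $t>0$. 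Combining with the trivial estimate $|\prod f_i(x)-\prod g_i(x)|\le \sum_k|\prod_{i<k}g_i(x)|\,|f_k-g_k|(x)\,|\prod_{i>k}f_i(x)|$, the set $E_\e=\{x\in B:\limsup_{t\to 0}|S^m_{\a,t}(f_1,\dots,f_m)(x)-f_1(x)\cdots f_m(x)|>\e\}$ is contained, modulo the continuous case, in a sum of level sets of these error terms.

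Finally I invoke Theorem~\ref{t2}, whose hypothesis $1/p<(mn-\a)/n$ is exactly our standing assumption. Applying \eqref{B} to each error term together with H\"older's inequality, each summand has $L^p$ norm controlled by $C\,\d\,\prod_{i\ne k}\bigl(\|g_i\|_{L^{p_i}}+\|f_i\|_{L^{p_i}}\bigr)$. Thus $|E_\e|\cdot \e^p\le C(\mathbf f)\d^p$, and letting $\d\to 0$ we conclude $|E_\e|=0$ for every $\e>0$, giving \eqref{99} a.e. on $B$; exhausting $\R^n$ by balls completes the proof. The main obstacle is strictly bookkeeping: controlling the multilinear telescoped error by a \emph{finite} sum of terms to which the maximal inequality of Theorem~\ref{t2} applies; once that decomposition is in place the argument is the routine Lebesgue differentiation scheme.
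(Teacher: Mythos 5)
Your overall scheme—localize, verify the limit for a dense class of nice functions, telescope the error, and control the oscillation set via the maximal bound plus Chebyshev—is the same scheme the paper uses (the paper merely packages the routine oscillation step as Proposition~\ref{p3}). But as written your argument has a genuine gap at the endpoint exponents that the statement explicitly allows. The corollary permits $p_i=\infty$, and in that case you cannot ``pick $g_i\in C_c(\R^n)$ with $\|f_i-g_i\|_{L^{p_i}}<\d$'': compactly supported continuous (or smooth) functions are not dense in $L^\infty$, and if every $p_i=\infty$ then also $p=\infty$ and your Chebyshev step $|E_\e|\,\e^{p}\le C\d^{p}$ is meaningless. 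The paper's proof contains the extra (small but necessary) idea that fixes this: after truncating to $g_i=f_i\chi_{(N+1)\mathbb B^n}$, which are compactly supported and hence lie in every lower Lebesgue space, one \emph{lowers} the exponents to finite $q_i<p_i$ with $1<q_i$ and $\sum_i 1/q_i$ still strictly less than $(mn-\a)/n$ (possible because the hypothesis is a strict inequality), and runs the density--maximal argument in $L^{q_1}\times\cdots\times L^{q_m}$, where smooth compactly supported functions are dense and $q<\infty$. Your proof needs this reduction, or some substitute, to cover $p_i=\infty$.

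A second, smaller omission: the corollary is stated for $0\le\a\le 1$, but Theorem~\ref{t2} only provides the maximal inequality for $0\le\a<1$. For $\a=1$ the operator $S^m_{1,t}$ is the multilinear spherical average, and the needed bound for $S^m$ is not supplied by Theorem~\ref{t2}; the paper invokes the boundedness of the multilinear spherical maximal function from \cite{D2019} at this endpoint. (The dense-class limit also needs the separate, though equally easy, observation that the normalized surface measure is a probability measure.) With the exponent-lowering reduction and the $\a=1$ citation added, your argument goes through and coincides in substance with the paper's.
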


Parts of  Theorem~\ref{t1} may be new even when $m=1$. Theorem~\ref{t2} is only new when $m\ge 2$ as
the case $m=1$ was considered in~\cite{S1976}.  
The proofs of these theorems can be suitably adapted to the measures 
	$$
	\f{ 	q } { B( {mn}/q ,1-\a)} \f{d\vec y}{(1-|\vec y\, |^q)^{ \a}} 
	$$
	for any $q>0$ in lieu of 	  
	$$
	\f{ 	2 } { B(\f{mn}2,1-\a)} \f{d\vec y}{(1-|\vec y\, |^2)^{ \a}} 
	$$
in \eqref{MSa}. To simplify the notation in our proofs, we adopt the following conventions: 
\begin{center}
\begin{tabular}{  l l l }
& $\vec y = (y_1,\dots , y_m)\in( \R^{n})^m$  &\qquad $ [\vec f\,]  =   (f_1,\dots , f_m) $\\
& $d\vec y  = dy_1\cdots dy_m$        & \qquad    $ ( f_1\otimes \cdots \otimes f_m ) (\vec y\,)= f_1(y_1) \cdots   f_m(y_m)$ \\
& $\vec \theta = (\theta_1,\dots , \theta_m)\in  \mathbb S^{mn-1}$  &
\qquad    $\otimes \vec f  =  f_1\otimes \cdots \otimes f_m $  \\
& $\overline{x}= \underbrace{(x,\dots, x)}_{\textup{$m$ times} } \in( \R^{n})^m$ &
\qquad    $| \!\otimes\! \vec f  \,| = | f_1| \otimes \cdots \otimes |f_m |$
\end{tabular}
 \end{center}

 Two main ideas are used in the proof of 
Theorem ~\ref{t1};  integration by parts  and 
  the fundamental theorem of calculus, both 
  with respect to the radial coordinate. Theorem  ~\ref{t2} is based on 
 a slicing formula that allows us to control $S^m_\alpha$ by the product 
 of the  Hardy-Littlewood maximal operators acting on $m-1$ input functions
 and  of $S_\alpha$ (defined in \eqref{Sa}) acting on the remaining function. 
 This gives estimates near the vertices of the region on which boundedness is claimed, 
 while the remaining bounds are obtained by multilinear interpolation.

\section*{The proof of Theorem~\ref{t1}}
  
  Before we discuss the proof of Theorem~\ref{t1} we note that 
  when $\a=0$,  equality holds in the first inequality in \eqref{lim}, since 
$$
\f{\omega_{mn-1} }2 B\left(\f{mn}2,1\right) =\f{\omega_{mn-1}}2   \frac{2}{mn} = v_{mn} . 
$$
That is, $M^m [\vec f\,](x)= S_0^m [\vec f\,](x)$. 
In fact \eqref{lim} is   valid even when $\a=0$, since 
\begin{align*}
M^m [\vec f\,](x) = S_0^m [\vec f\,](x) &= \sup_{t>0} \frac{1}{v_{mn}} \int_0^1 \int_{\mathbb S^{mn-1}} | \!\otimes\! \vec f  \,|(\overline{x}-tr\vec \theta\, ) d\sigma_{mn-1} 
(\vec \theta\,) r^{mn-1} dr\\
&\leq  \frac{1}{v_{mn}} \int_0^1 \sup_{t'>0} \int_{\mathbb S^{mn-1}} | \!\otimes\! \vec f  \,| (\overline{x}-t'\vec \theta\, ) d\sigma_{mn-1}(\vec \theta\,)  r^{mn-1} dr\\
&= mn\, S^m[\vec f\,] (x) \int_0^1 r^{mn-1} dr\\
&= S^m[\vec f\,](x).
\end{align*}

\begin{proof}[Proof of Theorem \ref{t1}]
First we show that for any $0<\a<1 $  we have $S_\a^m [\vec f\,](x) \leq S^m[\vec f\,](x)$ for any $x\in \R^n$.
Indeed, we have  
\begin{align*}
& \frac{1}{\omega_{mn-1}}\frac{2}{B(\f{mn}2,1-\a)}\esssup\limits_{t>0} \int_{\mathbb{B}^{mn}}
| \!\otimes\! \vec f  \,| (\overline{x}-t \vec y\, ) (1- |\vec y\,|^2)^{-\a } d\vec y\\
\leq& \frac{1}{\omega_{mn-1}}\frac{2}{B(\f{mn}2,1-\a)} \int_0^1 \f{ r^{mn-1} }{ (1-r^2)^{ \a }} \esssup_{t>0} \int_{\mathbb S^{mn-1}}
| \!\otimes\! \vec f  \,|(\overline{x}- r t \vec \theta \, )   d\sigma_{mn-1} (\vec \theta\, ) dr\\
\leq& \frac{1}{\omega_{mn-1}}\frac{2}{B(\f{mn}2,1-\a)} \bigg( \int_0^1 \f{ r^{mn-1} }{ (1-r^2)^{ \a }}dr\bigg)
 \esssup_{t'>0} \int_{\mathbb S^{mn-1}} 
| \!\otimes\! \vec f  \,| (\overline{x} -t' \vec \theta\,) d\s(\vec \theta\,) \\
= &S^m [\vec f\,] (x),
\end{align*}
as the   $r$ integral in the parenthesis is equal to $\f12B(\f{mn}2,1-\a)$. This concludes the proof of the second inequality in \eqref{lim}. 

Next we  prove the first inequality in \eqref{lim}. That is, for a fixed $x\in \R^n$ and $0< \a  < 1$, we show that 
$M^m [\vec f\,]  (x)\leq S_\a^m[\vec f\,] (x)$.   If for some $x\in \R^n$ we had $M^m[\vec f\,](x) =\infty$, 
we would also have that $S_\a^m[\vec f\,] (x)=\infty$ as $(1-|\vec y \, |^2)^{-\a}\ge 1$ when $|\vec y \, |< 1$. 
  So we may assume that $M^m[\vec f\,](x) <\infty$ in the calculation below. 
For fixed $t>0$ we define 
\[
H_t(r) = \int_0^r s^{mn-1} \bigg( \int_{\mathbb S^{mn-1}} 
| \!\otimes\! \vec f  \,|(\overline{x}-ts\vec \theta\,)\, d\s(\vec \theta\,) 
\bigg) ds = \int_{|\vec y\, |\le r}| \!\otimes\! \vec f  \,| (\overline{x}-t \vec y\,) d\vec y, 
\]
for $r>0$. As each $f_j$ is locally integrable, the integral on the right converges absolutely, and thus the 
expressions in the parentheses are finite for almost all $s>0$ and 
moreover, the   $s$-integral  converges absolutely.  Thus $H_t(r) $ is the integral from $0$ to $r$ of an $L^1$ function.
Then,   the Lebesgue differentiation theorem gives 
$$
\f{d}{dr} H_t(r) = H_t'(r) =r^{mn-1}  \int_{\mathbb S^{mn-1}}  | \!\otimes\! \vec f  \,|
(\overline{x}-tr\vec \theta\,) d\s(\vec \theta\, )  \qquad
\textup{for almost all $r>0$.} 
$$
 Moreover,  for any $r>0$ we have 
 $$ 
 \esssup\limits_{t>0} \frac{1}{v_{mn} r^{mn}} H_t(r) =  \esssup\limits_{t>0} \frac{1}{v_{mn}  } H_{rt} (1) = 
  \esssup\limits_{t'>0} \frac{1}{v_{mn}  } H_{t'} ( 1) = M^m [\vec f\,](x) <\infty,
 $$
where in the last equality we replaced the essential supremum by the supremum, 
 using the continuity of the function 
$$
t\mapsto M_t^m(f_1,\dots,f_m)(x) =  \frac{1}{v_{mn } }\int_{\mathbb{B}^{mn}} \prod_{i=1}^m |f_i(x-ty_i)|  dy_1\cdots dy_m, 
$$
 for any $f_i\in L^1_{loc}(\R^n)$, which can be obtained by an application  of the Lebesgue dominated convergence theorem. 
Let 
$$
c_{mn,\a}=  \f{2}{ \omega_{ mn-1}B(\f{mn}2,1-\a) }   . 
$$
For any $0<b<1$ we write 
\begin{align*}
S_\a^m &[\vec f\,]  (x) \\
&\geq \esssup\limits_{t>0} c_{mn,\a}\int_0^b H'_t(r) \f{1}{ (1-r^2)^{ \a } } dr\\
&= \esssup\limits_{t>0} c_{mn,\a} \left[H_t(b) \f{1}{ (1-b^2)^{ \a}} -\int_0^b H_t(r)  \f{  { 2\a r} }{ (1-r^2)^{ \a+ 1}} dr\right]\\
&\geq \esssup\limits_{t>0} c_{mn,\a} \left[H_t(b) \f{1}{ (1-b^2)^{ \a } } - \int_0^b M^m[\vec f\,](x)  \f{ { 2\a r}}{ (1-r^2)^{ \a+1}} v_{mn} r^{mn} dr\right]\\
&=c_{mn,\a} \left[ M^m[\vec f\,] (x) \f{v_{mn} b^{mn} }{ (1-b^2)^{ \a }}  - \int_0^b M^m[\vec f\,](x)
\f{  { 2\a r}}{ (1-r^2)^{ \a+ 1}  } v_{mn} r^{mn}dr\right]\\
&= c_{mn,\a} M^m[\vec f\,] (x)v_{mn}\left[ \f{b^{mn}}{ (1-b^2)^{ \a}}  - \int_0^b \f{ { 2\a r}}{ (1-r^2)^{ \a+1}} r^{mn}dr\right]\\
&=c_{mn,\a} M^m[\vec f\,] (x)v_{mn} \left[mn\int_0^b(1-r^2)^{-\a }r^{mn-1}dr\right],
\end{align*}
where all the previous steps make use of the assumption that $M^m[\vec f\,](x) <\infty$. 
Letting $b\to 1^{-}$ we obtain the first inequality in \eqref{lim}. 
 So we established both inequalities in  \eqref{lim} for $f_i\in L^1_{loc}(\R^n)$.

\medskip

Our next goal is to show that 
\begin{equation}\label{TBSmiddle}
\varliminf\limits_{ {\a\to 1^-}} S_\a^m[\vec f\,](x) \geq S^m [\vec f\,](x) , 
\end{equation}
 where $\varliminf\limits $ denotes the limit inferior. Let us fix $f_j $ in $L^1_{loc}(\R^n)$  and $x\in \R^n$. We define
 $$
 G_{\vec f\,} (t) =  \int_{\mathbb S^{mn-1}} | \!\otimes\! \vec f  \,| (\overline{x} -t  \vec \theta \,) d\sigma_{mn-1} (\vec \theta \, ). 
 $$
 We observed earlier that for any $L<\infty$ we have 
 \begin{equation}\label{eq:th1-tGL1loc}
 \int_0^L t^{mn-1}  G_{\vec f\,} (t) \, dt \le \prod_{i=1}^m  \int_{ (  |x|+
 { L} )\mathbb B^n } |f_i(y_i)|\, dy_i <\infty 
 \end{equation}
thus $t^{mn-1}  G_{\vec f\,} (t)$ lies in $L^1_{\textup{loc}} ([0,\infty))$ and 
$ G_{\vec f\,} (t) <\infty$ for almost all $t>0$. 
 {  We will show that for almost all $t>0$ we have 
\begin{equation}\label{TBSmiddle2}
\lim\limits_{ {\a\to 1^-}} \int_0^1G_{\vec f\,} (rt) \f{2r^{mn-1} (1-r^2)^{-\a} } { B(\f{mn}2,1-\a)} dr = G_{\vec f\,} ( t) .
\end{equation}
Once \eqref{TBSmiddle2} is shown, we deduce 
$$
\varliminf \limits_{ {\a\to 1^-}} { \esssup_{t'>0}} \int_0^1G_{\vec f\,} (rt') \f{2r^{mn-1} (1-r^2)^{-\a} } { B(\f{mn}2,1-\a)} dr \ge G_{\vec f\,} ( t)
$$
{ for almost all $t>0$, 
and taking the essential supremum  on the right over    $t>0$}, yields \eqref{TBSmiddle}.

For  smooth functions with compact support $\varphi_1, \dots, \varphi_m$ we have 
 \begin{equation}\label{TBSmiddle4}
\lim \limits_{ {\a\to 1^-}}  \int_0^1 \big| G_{\vec \varphi\,} (rt) -G_{\vec \varphi\,} ( t)\big| \f{2r^{mn-1} (1-r^2)^{-\a} } { B(\f{mn}2,1-\a)}dr =0  
\end{equation}
as
$$
\bigg| \prod_{i=1}^m \Big| \varphi_i(x-rt\theta_i) \Big|- \prod_{i=1}^m \Big|\varphi_i(x- t\theta_i) \Big|\bigg|  \le 
\bigg| \prod_{i=1}^m \varphi_i(x-rt\theta_i) - \prod_{i=1}^m \varphi_i(x- t\theta_i) \bigg| \le C\, t (1-r) 
$$
and this factor cancels the singularity of $(1-r^2)^{-\a}$ while $\lim\limits_{\a\to 1^-}B(\f{mn}2,1-\a)= +\infty$.  This implies \eqref{TBSmiddle2} with $\vec \varphi =(\varphi_1, \dots, 
\varphi_m) $ in place 
of $\vec f $, when each $\varphi_i$ is smooth and compactly supported.  
Next, we extend  \eqref{TBSmiddle2} to our fixed functions 
$f_j $ in $L^1_{loc}(\R^n)$.}
{

 Changing   variables $r'=  r t$
 we rewrite the integral in  \eqref{TBSmiddle2} as 
 \begin{eqnarray*}
 && \int_0^1 G_{\vec f\,}(rt) \f{2r^{mn-1} (1-r^2)^{-\a} } { B(\f{mn}2,1-\a)}  dr\\
 	&=&\f{2}{ B(\f{mn}2,1-\a)}  \f{1}{t^{mn}} \int_0^t G_{\vec f\,}(r') (r')^{mn-1} \Big( 1+\f{r'}{t}\Big)^{-\a} 
 	\Big( 1-\f{r'}{t}\Big)^{-\a} dr' \\
 	&=& \f{2}{ B(\f{mn}2,1-\a)t^{mn-\a}} \int_0^t G_{\vec f\,}(t-r) (t-r)^{mn-1} \Big( 2-\f{r}{t}\Big)^{-\a} 
 	r^{-\a} dr\\
 	&=:& \f{1}{ B(\f{mn}2,1-\a)t^{mn-\a}} T_\a\left((\cdot)^{mn-1}G_{\vec f\,}\right)(t). 
 \end{eqnarray*}
Note that
\[\f1{(1-\a) B(\f{mn}2,1-\a)t^{mn-\a}}\to \f1{t^{mn-1}}\]
as  $\a\to 1^-$. Thus, by setting $g(t)= t^{mn-1}  G_{\vec f\,}(t)$, we see that the   assertion   in  \eqref{TBSmiddle2}  can be equivalently reformulated as 
\begin{equation}\label{NEW3}
T_\a ( g  )   \to g \qquad\textup{a.e. on $(0,L)$ for every $L\in \mathbb Z^+$.}
\end{equation}
Let us fix such an $L\in \mathbb Z^+$ and note that $g=(\cdot)^{mn-1}G_{\vec f\,}\in L^1((0,L))$ by \eqref{eq:th1-tGL1loc}. In view of Proposition~\ref{p3} (for $m=1$) and the known convergence on a class of  smooth functions with compact support, it will suffice to know that the maximal operator 
\[ 
 T_* (h) (t)=\sup_{0<\a<1} T_\a (|h|)(t), 
 \]
maps $L^1((0,L)) $ to   $L^{1,\infty}(\mathbb R) $.

We introduce the   family of   kernels  
\begin{equation}\label{NEW1}
W_\a(r) = (1-\a) r^{-\a} \chi_{(0,\infty)}(r) , \qquad 
0<\a<1, 
\end{equation}
which form  an approximate identity and the operators
\(\tilde T_\a (g)(x) = \left(|g|\chi_{[0,\infty)}\ast W_\a \right).\)
 
Since $2^{-\a} \le (2-t/r)^{-\a} \le 1$  when $0\le r\le t$, 
$$
T_* (g)(t) \le \tilde T_* (g)(t): = \sup_{0<\a<1} \tilde T_\a (|g|)(t).
$$
Therefore it suffices to show that $\tilde T_*$  is pointwise controlled by the Hardy-Littlewood maximal operator $M$.  

For $0< t< L$ and $h\in L^1((0,L))$ we set $h=0$ outside $(0,L)$ and we write
 \begin{align*}
(1-\a)  \int_0^\infty |h(t-r)|\, \chi_{[0,\infty)}(t-r)  \, r^{-\a}dr & \le   (1-\a)2^\a 
\sum_{j=-\infty}^{    [\log_2 t]+1} 
\int_{2^{j-1}}^{2^j} |h(t-r)| 2^{-j\a} dr\\
& \le   (1-\a)2^\a  \sum_{j=-\infty}^{    [\log_2 t]+1}  \f{2^{j(1-\a)} }{2^j} 
\int_{0}^{2^j} |h(t-r)|   dr \\
& \le   (1-\a)2^\a  \Big( \int_{0}^{ 2t} y^{-\a} dy \Big) M(h) ( t)   \\
& \le   C \, t^{1-\a} 
M(h) ( t)  
\end{align*}
for some constant $C$ independent of $t>0$ and of $\a<1$. This gives 
   \begin{equation}\label{NEW5}
 T_*(h) (t) \le C  \, L\, M(h)(t) , \qquad t \in (0,L), 
 \end{equation}
and from this we derive the weak type $(1,1)$ property of $T_*$ on $(0,L)$ which, as observed earlier, implies  \eqref{TBSmiddle}.
}

\medskip

{  Finally, we prove \eqref{lim3}. To do this we fix $x$ in $\R^n$ and $f_i$ in $L^1_{loc}(\R^n)$ such that $S_{\a_0} ^m [\vec f\,](x) <\infty$ for some $\alpha_0\in (0,1)$. 
We first show that 
\begin{equation}\label{pp99}
\varlimsup\limits_{\a\to 0^+} \esssup_{t>0} K_{t, f, x} (\a) \le  
\esssup_{t>0}  \int_{\mathbb{B}^{mn}}  |\!\otimes\! \vec f  \,| (\overline{x}-t\vec y\, )\,   d\vec y .
\end{equation} 
where  we set
$$
K_{t, f, x}(\a) =\int_{\mathbb{B}^{mn}}   |\!\otimes\! \vec f  \, | (\overline{x}-t\vec y\, )  (1-|\vec y\,|^2)^{-\a }d\vec y,
\qquad t>0\, .
$$
Since we are taking the limit as $\a\to 0^+$ we may consider $\a<\f12 \a_0  =\a_1$. 
 By the fundamental    theorem of calculus, for $0< \a< \a_1$, we write
\begin{equation}\label{Kttt}
 K_{t, f, x}(\a) \le      K_{t, f, x}(0) +   \big|K_{t, f, x}(\a) - K_{t, f, x}(0)\big| 
  \le    K_{t,f,x}(0) +\a \sup_{\substack{ 0\le \beta\le \a_1  }} |K_{t, f, x}'(\beta)| , 
\end{equation}
where      $K_{t, f, x}'(\beta)$ denotes  the derivative of $K_{t, f, x}$ with respect to $\beta$. 
 We have
\[
|K_{t, f, x}'(\beta)|  = \int_{\mathbb{B}^{mn}} 
| \!\otimes\! \vec f  \,| (\overline{x}-t \vec y\, ) \Big(  \ln \frac{1}{ 1-|\vec y\, |^2}\Big)  
\f{d\vec y}{  (1-|\vec y\, |^2)^{ \beta } }    
\]
for  $0\le \beta\le \a_1<\a_0$  
and this is bounded by 
\[
c_{\a_0}  \int_{\mathbb{B}^{mn}} 
| \!\otimes\! \vec f  \,| (\overline{x}-t \vec y\, ) 
\f{d\vec y}{  (1-|\vec y\, |^2)^{ \a_0 } },   
\]
which is finite for almost every $t>0$ by assumption. 
Using this fact and 
taking the essential supremum in \eqref{Kttt} with respect to $t>0$,  we conclude   for $\a<\a_1$ that
\[
\esssup_{t>0}  K_{f , t, x}(\a)  \leq
\esssup_{t>0}   \int_{\mathbb{B}^{mn}}  |\!\otimes\! \vec f  \,| (\overline{x}-t\vec y\, )\,   d\vec y 
+ \a\, C_{\a_0,m,n }  S_{\a_0}^m[\vec f\,](x)  
\]
for some constant $C_{\a_0,m,n }<\infty$. 
In view of the fact that $S_{\a_0}^m[\vec f\,](x)<\infty$ we obtain
\[
 \varlimsup_{\a\to 0^+} \esssup_{t>0}  
 K_{f , t, x}(\a)    \leq \esssup_{t>0}  
 \int_{\mathbb{B}^{mn}}  |\!\otimes\! \vec f  \,|(\overline{x}-t\vec y\, )\,   d\vec y  .
 \]
This fact combined with  
\[
\lim_{\a\to 0^+}  \dfrac{2}{\omega_{mn-1}B(\f{mn}2,1-\a)}= \dfrac{1}{v_{mn }} 
 \]
implies that 
 \[
 \varlimsup_{\a\to 0^+} S_\a^m [\vec f\,] (x) \le  M^m [\vec f\,](x)  .
\]
 This yields   \eqref{lim3} since  we obviously  have  $ \varliminf\limits_{\a\to 0^+} S_\a^m [\vec f\,] (x)   \ge  M^m [\vec f\,](x)$ by \eqref{lim}.}
 \end{proof}

 \section*{The proof of Theorem~\ref{t2}}

\begin{proof}[Proof of Theorem~\ref{t2}]
 For any $0\leq \a<1$, we   prove that  the   estimate 
\begin{equation}\label{ptwise}   
S^m_\a (f_1,\dots,f_m)(x) \le  S_\a ( f_k)(x) \prod_{i\neq k} M (f_i) (x), 
\end{equation}
is valid for all   $f_i\in L^1_{loc}(\R^n)$ and all $x\in \R^n$,  
where $S_\a$ is defined in \eqref{Sa} and $M$ is the Hardy-Littlewood maximal operator on $\R^n$. 
For any fixed $t>0$, we set
\[
S^m_{\a,t} (f_1,\dots,f_m)(x) =    c_{mn,\a} \int_{\mathbb{B}^{mn}}\prod_{i=1}^m |f_i(x-ty^i)|(1-|y|^2)^{-\a } dy 
\]
where $ c_{mn,\a} = { 2} / (  {\omega_{mn-1} B( {mn}/2  ,1-\a)} ) $. 

For $y^i\in \R^n$ we set 
$$
y=(y^1,\dots,y^m)\qquad \textup{and} \qquad \hat{y}^k = (y^1,\dots,y^{k-1},y^{k+1}, \dots , y^m) . 
$$
Then for a fixed $k\in \{1,2,\dots ,m\}$ we have
\begin{align*} &c_{mn,\a} ^{-1}\,\, S^m_{\a,t} (f_1,\dots,f_m)(x) \\
&=    \int_{\mathbb{B}^{mn}}\prod_{i=1}^m |f_i(x-ty^i)|(1-|y|^2)^{-\a } dy\\
&=   \int_{\mathbb{B}^{(m-1)n}}   \int_{\sqrt{1\! - \!  |\hat{y}^k |^2} \mathbb{B}^{ n}}\prod_{i=1}^m |f_i(x-ty^i)   |  \,  (1-|\hat{y}^k |^2)^{-\a}  
  \bigg(1- \Big| \frac{  y^k} {   \sqrt{1\! - \!  |\hat{y}^k |^2}   }     \Big|^2\bigg)^{-\a } dy^k d\hat y^k\\
&=    \int_{\mathbb{B}^{(m-1)n}} \! \prod_{i\neq k} |f_i(x-ty^i)|  \! \int_{\mathbb{B}^n} 
\!\!\! |f_k(x-   t \sqrt{1-|\hat{y}^k |^2 }    u^k)|  \f{(1\! - \! |\hat y^k|^2)^{\f n2-\a}} {   (1\! - \! |u^k|^2)^{ \a }  }  du^k d\hat y^k\\
&\le     \int_{\mathbb{B}^{(m-1)n}}\prod_{i\neq k} |f_i(x-ty^i)| \esssup_{t>0} \int_{\mathbb{B}^n}|f_k(x-tu^k)| (1\! - \! |u^k|^2)^{-\a }du^k
 \f{d\hat y^k} {  (1\! - \! |\hat y^k|^2)^{\a -\f n2}  }  \\
& \leq c^{-1}_{n,\a} \,S_\a ( f_k) (x)\cdot  \sup_{t>0} \int_{\mathbb{B}^{(m-1)n}}\prod_{i\neq k} |f_i(x-ty^i)|  \f{d\hat y^k}{(1-|\hat y^k|^2)^{\a -\f n2}}, 
\end{align*}
with $c_{n,\a} = { 2} / (  {\omega_{n-1} B( {n}/2  ,1-\a)} ) $. 

Next, we use the following fact concerning multilinear approximate identities:  
Suppose that $\phi:\R^{\kappa n}\to \C$ has an integrable radially decreasing majorant $\Phi$, and let $\phi_t (\vec y\, ) = t^{-\kappa n}  \phi(\vec y/t)$. If $\ast $ denotes convolution on $\mathbb R^{\kappa n}$, then
the estimate 
\begin{equation}\label{maj} 
\sup_{t>0} | (\otimes  \vec{f}  \,) \ast \phi_t (\overline{ x}) | \leq \|\Phi\|_{L^1(\R^{\kappa n})} M^m[\vec{f}\,](x)
\end{equation}
is valid for all locally integrable functions $f_j$ on $\R^n$, $j=1,\dots,\kappa$. 
This follows by applying \cite[Corollary 2.12]{GClassical}  to the function    $(x_1,\dots, x_\kappa) \mapsto \otimes\vec{f} (x_1,\dots, x_\kappa) $ on  $\mathbb R^{\kappa n}$ and using that the $\kappa n$-dimensional 
Hardy-Littlewood maximal function of $\otimes\vec{f}$ at the point $
{ \overline{x} } = (x,\dots , x)\in (\R^n)^\kappa$ equals
$M^{ { \kappa }   } [\vec{f}\,] ({ \overline{x} }  )  .$

Returning to the previous calculation, for $\hat{y}^k\in\R^{(m-1)n}$ we consider the function $\phi(\hat{y}^k) = (1-|\hat{y}^k|^2)_{+}^{\f{n}{2} -\a}$. Using that $n\ge 2$ (hence $n/2-\a \ge 0$), we calculate that 
$$
\|\phi\|_{L^1(\R^{(m-1)n})} = \f{\omega_{(m-1)n-1}}2 \,B\left(\f{(m-1)n}2, \f{n}2 + 1 -\a\right).
$$
Using \eqref{maj} for $\kappa = m-1$, we can see that 
\begin{equation*} \sup_{t>0} \int_{\mathbb{B}^{(m-1)n}}\prod_{i\neq k} |f_i(x-ty^i)|  \f{d\hat y^k}{(1-|\hat y^k|^2)^{\a -\f n2}} \leq \|\phi\|_{L^1(\R^{\kappa n})} M^{m-1}[\hat{f}^k](x),
\end{equation*}
where $[\hat{f}^k] = (f_1,\dots,f_{k-1},f_{k+1},f_m).$ Using the well known fact that $\omega_{ n-1} = \f{2\pi^{n/2}}{\Gamma(n/2)}$ and the identity $B(a,b) = \f{\Gamma(a)\Gamma(b)}{\Gamma(a+b)}$, one can verify that 
\begin{equation*} c_{mn,\a}\cdot c^{-1}_{n,\a}\cdot\|\phi\|_{L^1} = 1.
\end{equation*}
Thus we conclude that 
\begin{equation*} S^m_{\a,t} (f_1,\dots,f_m)(x) \leq  S_\a (f_k)(x) M^{m-1}[\hat{f}^k](x).
\end{equation*}

Taking the essential supremum of 
$S^m_{\a,t} (f_1,\dots,f_m)(x)$ over $t>0$ yields   
\begin{equation}\label{BB} 
S^m_{\a} (f_1,\dots,f_m)(x) \leq  S_\a (f_k)(x) M^{m-1}[\hat{f}^k](x).
\end{equation}  
Since \eqref{BB} holds for $\a=0$, we have that
\begin{equation*} 
{ M^{m} } [\vec{f}\,] \leq M(f_1)(x) M^{m-1}[\hat{f}^{1}](x).
\end{equation*}  
Therefore, consecutive applications of 
{  this fact and} \eqref{BB} conclude  the proof of \eqref{ptwise}.

We now turn to the boundedness of $S_\a$ when $m=1$. It was shown in \cite{S1976} that $S_\a$ is bounded 
on $L^p$ for $ \frac{n}{n- \a}<p\le \infty$ when $n\ge 3$. We remark  that this result also holds when $n=2$.  
We now provide a sketch of a proof valid in all dimensions $n\ge 2$. To do this, for $f\in \mathcal S(\R^n)$, we express
$S_\a f$ as a maximal multiplier operator
$$
S_\a f(x) =  \frac{2\pi^{ \a}}{\omega_{ n-1}}\frac{\Gamma(1-\a)}{B( \f n2,1-\a)}  
\sup_{t>0} \bigg| \int_{\R^n} \widehat{f}(\xi) \f{J_{ \f n2-\a }  (2\pi t |\xi|) }{ |t\xi|^{\f n2-\a} } e^{2\pi i x\cdot \xi} d\xi\bigg|
$$
using the identity in \cite[Appendix B.5]{GClassical}. To derive this we   use the Bochner-Riesz multiplier
$(1-|x|^2)^{-\a}_{ +}$ with a negative exponent, viewed as a kernel. Then the Fourier transform expression for 
 $(1-|x|^2)^{z}_{ +}$ when $\textup{Re } z>0$   is also  valid for $\textup{Re } z>-1$ by analytic continuation. Notice that in this 
 range of $z$, the kernel remains locally integrable. Using properties of Bessel functions,  the multiplier
 $$
 m_\a(\xi) = \f{J_{ \f n2-\a }  (2\pi   |\xi|) }{ | \xi|^{\f n2-\a} }
 $$
 is a smooth function which satisfies for all multi-indices $\gamma$
 $$
| \partial_\xi^\gamma m_\a(\xi) | \le \f{ C_{n,\gamma} }{  | \xi|^{\f {n+1} 2-\a} }  
 $$
 and the exponent   $a = \f {n+1} 2-\a$ is strictly bigger than $\f12$ (since $n\ge 2$ and $\a<1$). Then 
 the hypotheses of \cite[Theorem B]{Rdf1986} apply and we obtain that $S_\a$ is bounded on $L^p(\R^n)$ 
 (when restricted to Schwartz functions) 
 for 
 $$
 p> \f{2n}{n+2a-1} = \f{n}{n-\a}.
 $$ 
(In  \cite[Theorem B]{Rdf1986} there is an   upper restriction  on $p$, but as $S_\a$ is bounded on $L^\infty$ this does not 
apply here.) Then $S_\a$  extends to general $f\in L^p(\R^n)$ for $p>\f{n}{n-\a}$ by density, and this
extension coincides with that given in Definition~\ref{Def1}. 

We now use \eqref{ptwise} to obtain that 
\begin{equation}\label{Int99}
\| S_\a^m (f_1,\dots , f_m) \|_{L^p(\R^n)} \le C\, \prod_{i=1}^m \|f_i\|_{L^{p_i}(\R^n) } 
\end{equation}
for all $f_i\in L^{p_i}$, when $1<p_i \le \infty$ for $i\neq k$ and $\f{n}{n-\a}<p_k\le \infty$. Here the constant $C=C(m,\a,p_1,\dots,p_m)$ doesn't depend on the dimension $n$, since $S_\alpha (f)(x) \le S_1 (f)(x)$ and $\|S_1 (f)(x)\|_{L^p(\R^n)} \le c\, \|f\|_{L^p(\R^n)}$ for a constant $c$ independent of $n$ (see~\cite{SS1983}).

To describe geometrically the points $(1/p_1, \dots , 1/p_m)$ for which we claim boundedness for $S^m_\a$, consider the cube $Q=[0,1]^m$ and let $V$ be the set of all of its vertices except for the vertex 
$(1,1,\dots , 1)$. Then $|V|=2^m-1$. We consider the intersection of $Q$ with the half-space $H$ of $\R^m$ described by 
$$ 
H=\big\{(t_1,\dots , t_m):\,\, t_1+\cdots +t_m \le  \tfrac{mn-\a}n \big\}.
$$ 
Then $Q\cap  H$ has $2^m-1+m$ vertices, namely the set $V$ union the $m$ points
$$ 
(1, \dots , 1 , \tfrac{mn-\a}n, 1, \dots , 1),
$$  
where $\tfrac{mn-\a}n$ ranges over  the $m$   slots.    We claim that  $S_\a^m$ satisfies strong $L^p$ bounds in the 
interior of $Q\cap H$. To see this, we interpolate between 
estimates at  the vertices of $Q\cap H$.  Precisely, the interpolation works as follows: Let 
  $W$ be the vertices of $Q\cap H$ that do not belong to $V$ and let 
    $W'$ be a finite union of open balls 
centered at the points of $W$ intersected with $Q\cap H$.
 We interpolate between  
points $P=(1/p_1, \dots , 1/p_m)$ in $V\cup W'$.    If $P\in V$, 
then we have an estimate $L^{p_1}\times \cdots \times L^{p_m}$ to $L^p$ for $S^m_\a$, 
as at least one coordinate $1/p_k$ is   $0$ (i.e., $p_k=\infty$), and we apply \eqref{Int99} for this $k$.  
Now if $P$ lies in $W'$, then there is a 
  $k\in \{1, \dots , m\}$ such that    $p_k>\f{n}{n-\a}$ and $p_i $  are near $1$ for all $i\neq k$.
Using estimate \eqref{Int99} again for this choice of $k$, we   obtain that  $S_\a^m$ maps $L^{p_1}\times \cdots \times L^{p_m}$ to $L^p$ 
 at this point $P$. Applying the $m$-linear version of the Marcinkiewicz interpolation theorem ~\cite{GLLZ2012}, 
 we deduce the boundedness of $S_\a^m$ in the interior of $Q\cap H$. 
Similar reasoning provides     weak type bounds on all the faces of  $Q\cap H$, except possibly on the $H$ face, on which 
 we don't know if there are any bounds at all.

 Finally we show the optimality of the range $p>\frac{n}{mn-\a  }$.  We consider the action of     $S^m_\a$ on characteristic 
functions; specifically, let $f_1=\dots=f_m= \chi_{\mathbb{B}^n} $. Since the characteristic functions belong in all $L^p$ spaces, in the definition of $S^m_\a[\vec{f }\,] $ we can replace the essential  supremum by the  supremum (see Corollary~\ref{C1}).  Therefore for $|x|$ sufficiently large it is enough to 
pick $t=\sqrt{m} \, |x|$ in order to write the estimate
\begin{align}
c^{-1}_{mn,\a}\, S^m_\a   (f_1,\dots,f_m)(x) &\ge   \int_{\mathbb{B}^{mn}} \prod_{i=1}^m |f_i(x-\sqrt{m} \, |x|\,  y_i)| \cdot (1-|\vec y\, |^2)^{-\a } d\vec y\notag \\
&\geq  \int_{|\vec y\,  -\f{   \overline{ x} }{ \sqrt{m} \,|x|} | \le   \f{ 1 }{ \sqrt{m} \,  |x|} }   (1-|\vec y\, |^2)^{-\a} d\vec y \notag\\
&\geq  2^{-\a} \int_{|\vec y\,  -\f{   \overline{ x} }{ \sqrt{m} \,|x|} | \le   \f{ 1 }{ \sqrt{m} \,  |x|} }   (1-|\vec y\, |)^{-\a} d\vec y\label{ninth}
\end{align}
since 
$$
\Big|\vec y\,  -\f{   \overline{ x} }{ \sqrt{m} \,|x|} \Big| \le   \f{ 1 }{ \sqrt{m}\, |x|} \implies \big|x-\sqrt{m}\, |x| \, y_j \big| \le 1
\qquad \textup{for all $j=1,\dots , m$.}
$$
The point $\overline { \theta_x}= \f{   \overline{ x} }{ \sqrt{m} |x|} $ lies in the   sphere $\mathbb S^{mn-1}$. A simple
geometric argument gives that the integral in \eqref{ninth} expressed in polar coordinates
$\vec y = r\vec \theta$ is at least 
$$
 \int_{ 1-\f{c}{|x|} }^1 (1-r)^{-\a }   r^{mn-1} \int_{|\vec\theta\, -\overline { \theta_x}|  \le \f{c}{|x|} }  
  d\sigma_{mn-1}( \vec \theta \,)\, \f{ dr }{ 2^{\a}}
  \ge  \f{2^{-\a}}{|x|^{1-\a}} \f{C(m,n)}{|x|^{mn-1}} = \f{2^{-\a}C(m,n)}{|x|^{mn-\a}}
 $$
 for some small constants $c,C$ (depending on $n$ and $m$). 
We conclude the proof by noting  that the function $|x|^{-mn+\a}\chi_{|x|\ge 100}$ 
does  not lie in $L^p(\R^n)$ for   $p\leq \frac{n}{mn-\a }$.
\end{proof}


We proved Theorem~\ref{t2} working directly with $L^{p_i}$ functions. Alternatively, we could have worked  with 
a dense family of $L^{p_i}$ and then extend to $L^{p_i}$ by density. There is no ambiguity in this extension, in view 
of the following proposition.

 \begin{prop}\label{Psub}
Let $0<p_1,\dots , p_m,p\le \infty$. 
Suppose that $T$ is a subadditive operator in each 
variable\footnote{this means $|T(\dots , f+g, \dots )|\le |T(\dots , f , \dots) |+T(\dots ,  g, \dots) |$ for all $f,g$} that satisfies the estimate 
\begin{equation}\label{KK}
\| T(f_1,\dots , f_m) \|_{L^p} \le K \|f_1\|_{L^{p_1}} \cdots \|f_m\|_{L^{p_m}}
\end{equation}
for all functions $f_j$ in a dense subspace of $L^{p_j}$. Then $T$ admits a unique bounded subadditive extension from  
$L^{p_1}\times \cdots \times L^{p_m}$ to $L^p$ with the same bound. 
\end{prop}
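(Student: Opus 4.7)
The plan is to run the standard density-extension argument, with subadditivity substituting for linearity. The key ingredient I would establish first is a multilinear stability estimate: for $f_j, g_j$ in the dense subspace of $L^{p_j}$,
\begin{equation*}
\big\||T(f_1,\dots,f_m)| - |T(g_1,\dots,g_m)|\big\|_{L^p} \le 2K \sum_{i=1}^m \|f_i - g_i\|_{L^{p_i}} \prod_{j<i}\|g_j\|_{L^{p_j}} \prod_{j>i} \|f_j\|_{L^{p_j}}.
\end{equation*}
To obtain it, I would telescope one coordinate at a time in the style of \eqref{ab}. Since $f_i = (f_i - g_i) + g_i$, subadditivity in the $i$-th slot gives $|T(\dots,f_i,\dots)| - |T(\dots,g_i,\dots)| \le |T(\dots,f_i-g_i,\dots)|$, and the reversed inequality holds with $g_i-f_i$, so pointwise $\bigl||T(\dots,f_i,\dots)| - |T(\dots,g_i,\dots)|\bigr| \le |T(\dots,f_i-g_i,\dots)| + |T(\dots,g_i-f_i,\dots)|$. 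Iterating from $i=1$ to $i=m$ (keeping earlier slots at $g$ and later ones at $f$) and applying \eqref{KK} on each term produces the displayed bound.

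With stability in hand, the rest is routine. Given $f_j\in L^{p_j}$, choose approximating sequences $f_j^{(k)}$ in the dense subspace with $f_j^{(k)} \to f_j$. The stability estimate applied with $(f_j^{(k)}, f_j^{(l)})$ shows that $\{T(f_1^{(k)},\dots,f_m^{(k)})\}_k$ is Cauchy in $L^p$, since the norms $\|f_j^{(k)}\|_{L^{p_j}}$ stay bounded while $\|f_i^{(k)} - f_i^{(l)}\|_{L^{p_i}} \to 0$. Define
\begin{equation*}
T(f_1,\dots,f_m) := \lim_{k\to\infty} T(f_1^{(k)},\dots,f_m^{(k)}) \text{ in } L^p.
\end{equation*}
Independence from the approximating sequences follows from the same stability estimate applied to two such families. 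Subadditivity of the extension is inherited from the pre-limit inequalities after extracting an a.e.\ convergent subsequence, and the bound \eqref{KK} passes to the limit by continuity of the norm.

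For uniqueness, suppose $\tilde T$ is another subadditive extension with the same constant $K$. It then satisfies the same stability estimate and agrees with $T$ on the dense subspace. For any $f_j \in L^{p_j}$ and approximants $f_j^{(k)}$ we have
\begin{equation*}
\bigl\||\tilde T(f_1,\dots,f_m)| - |T(f_1,\dots,f_m)|\bigr\|_{L^p} \le \bigl\||\tilde T(\vec{f}\,)| - |\tilde T(\vec{f}^{(k)})|\bigr\|_{L^p} + \bigl\||T(\vec{f}^{(k)})| - |T(\vec{f}\,)|\bigr\|_{L^p},
\end{equation*}
and both terms on the right tend to $0$ by stability, forcing $|\tilde T(\vec f\,)| = |T(\vec f\,)|$ a.e., which yields $\tilde T = T$ in the non-negative setting in which this proposition is applied. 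The only real obstacle is the telescoping subadditivity inequality; everything afterwards is a direct adaptation of the classical Banach-space extension-by-density argument.
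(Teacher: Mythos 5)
Your argument is correct and takes essentially the same route as the paper: the telescoping decomposition in the spirit of \eqref{ab} combined with subadditivity in each slot and \eqref{KK} gives the stability/Cauchy estimate, after which the definition of the extension by limits, its independence of the approximating sequences, the preservation of the bound and of subadditivity, and uniqueness all follow exactly as you describe. The only cosmetic point is that for $0<p<1$ your stability constant $2K$ must carry the $L^p$ quasi-norm constant (as in the paper's \eqref{876}), which changes nothing in the argument.
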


\begin{proof}
For any $j\in \{1,\dots , m\}$, 
given $f_j\in L^{p_j}$ pick sequences $a_j^k, b_j^l$, $k,l=1,2,3,\dots$ in the given dense subspace of $L^{p_j}$ 
which converge to $f_j$ in $L^{p_j}$. 
Using the idea proving \eqref{ab} 
 and the subadditivity of $T$ in each variable we obtain:
$$
\big| T(a_1^k,a_2^k,\dots ,a_m^k) - T(b_1^l , b_2^l, \dots , b_m^l) \big| \le  
$$
$$
\sum_{i=1}^m\Big[ \big| T( b_1^l ,\dots ,b_{i-1}^l , a_i^k-b_i^l,  a_{i+1}^k,\dots, a_m^k )\big| +
  \big| T( a_1^k ,\dots ,a_{i-1}^k , b_i^k-a_i^l,  b_{i+1}^l,\dots, b_m^l )\big| \Big].
$$
Applying the $L^p$ (quasi norm) and hypothesis \eqref{KK} we deduce
\begin{align}\begin{split}\label{876}
& \big\| T(a_1^k,a_2^k,\dots ,a_m^k) - T(b_1^l , b_2^l, \dots , b_m^l) \big\|_{L^p} \\
& \qquad\qquad \le C_{p } K\sum_{i=1}^m \| a_i^k-b_i^l\|_{L^{p_i}} \prod_{j\neq i} \big[ \| a_j^k\|_{L^{p_j}} + \| b_j^l\|_{L^{p_j}}\big] .
\end{split}\end{align}
Taking $b_j^l = a_j^l$ in \eqref{876} we conclude  that the sequence 
$ 
\big\{ T(a_1^k,a_2^k,\dots ,a_m^k)\big\}_{k=1}^\infty
$ 
is Cauchy in $L^p$ and thus it has a limit $\overline{T}(f_1,\dots , f_m)$. This limit does not depend on the choice
of the sequences $ a_j^k $ converging to $ f_j$, as we can choose $l=k$ in \eqref{876} and let $k\to \infty$. 
Thus $T$ has a unique extension $\overline{T}$. This
extension is also bounded with the same bound and is subbadditive by density. 
\end{proof}

\section*{The proofs of Corollaries~\ref{C1} and ~\ref{C2}}  

Next we discuss the proof of Corollary~\ref{C1}. The case $m=1$ of this result is contained in \cite[Chapter XI Section 3.5]{SHarmonic}.
\begin{proof}  It suffices to   prove the assertion  for almost all $x $ in a ball $N\, \mathbb{B}^n $, as  $\R^n$ is 
a countable union of $N\, \mathbb{B}^n $ over $N=1,2,\dots$. Let us fix  such a ball  $N\, \mathbb{B}^n $. It will suffice to prove the 
continuity of $t\mapsto S_{\a, t}(f_1, \dots , f_m)(x)$ 
 on $(0,R)$ for every $R>0$. Fix such an $R>0$ as well. Then we may replace each $f_i$ by 
$g_i=f_i\chi_{(N+R) \,\mathbb{B}^n }$ as   $S_{\a, t}(f_1, \dots , f_m)(x) =S_{\a, t}(g_1, \dots , g_m)(x)$ when $x\in N \,\mathbb{B}^n $
and $0<t<R$. As $g_i$ have compact support and lie in $L^{p_i}$, they also lie in $L^{q_i}$, where $q_i<p_i$ are chosen so that $\f{1}{q}=\sum_{i=1}^m \f{1}{q_i} <\f{mn-\a}{n}$.  (The purpose of introducing $q_i<p_i$
 was   to replace all 
infinite indices $p_i $ by   finite ones, as there is no good dense subspace of $L^\infty$.) 

 We pick   sequences $\varphi_j^k$ of smooth compactly supported functions with   $\varphi_j^k
\to    g_j $  in $L^{q_j}(\R^n)$ (since $q_j<\infty$) and consider the sequence
$$
\esssup_{t>0} S^m_{\a , t} (g_1-\varphi^k_1, \dots , g_m-\varphi^k_m), \qquad { k} =1,2,3,\dots .
$$
By     \eqref{B} if $\a<1$ (or by \cite{D2019} if $\a=1$) this sequence converges to zero in $L^q(\R^n)$, thus there is a subsequence that converges to zero a.e. 
This implies that there is a subset $E$ of $\R^n$ of measure zero such that for   all $x\in\R^n\setminus E$ we have
\[
\lim_{k\to\infty}  \big\| S^m_{\a,t} [\vec{g}\, ](x) - S^m_{\a,t} [\vec{\varphi^k}] (x) \big\|_{L^\infty( (0,\infty),dt)} = 0 , 
\]
i.e., $S^m_{\a,t} [\vec{\varphi^k}] (x) \to S^m_{\a,t} [\vec{f}\, ](x)$ uniformly in $t> 0$.
Since $S^m_{\a,t} [\vec{\varphi^k}](x)$ is continuous in $t$, we conclude that $S^m_{\a,t} [\vec{g}\,](x)$ is also continuous in $t$, for almost every $x\in\R^n$.
\end{proof}

To prove Corollary~\ref{C2}, we will need a proposition analogous to~\cite[Theorem 2.1.14]{GClassical}. Let $(X,\mu)$ and $(Y,\nu)$ be $\sigma$ finite measure spaces and let $0<p_j\le \infty$, $j=1,\dots,m$, and $0<q<\infty$. Let $D_j$ be a dense subspace of $L^{p_j}(X,\mu)$. Suppose   that for all $t>0$, $T_t$ is  an $m$-linear operator  defined on $L^{p_1}(X,\mu)\times\dots\times L^{p_m}(X,\mu) $ with values in the space of measurable functions defined a.e. on $Y$.  Assume that for all $f_j\in L^{p_j}$, the   function 
$$
y\mapsto T_{\ast} (f_1,\dots,f_m)(y) = \sup_{t>0}|T_t (f_1,\dots,f_m)(y)| 
$$
is $\nu$-measurable on $Y$.

\begin{prop}\label{p3} Let $0<p_i \le\infty$, $1\le i\le m$, $0<q<\infty$ and $T_t$ and $T_\ast$ as in the previous discussion.  Suppose that there is a constant $B$ such that 
\begin{equation}\label{100}
\|T_{\ast} (f_1,\dots,f_m)\|_{L^{q,\infty}} \leq B \prod_{j=1}^m \|f_j\|_{L^{p_j}}
 \end{equation}
 for all $f_j\in L^{p_j}(X,\mu)$. Also suppose  that for all $\vp_j\in D_j$
\begin{equation}\label{tto1}
 \lim_{t\to 0} T_{t} (\vp_1,\dots, \vp_m) = T(\vp_1,\dots,\vp_m)
 \end{equation}
exists and is finite $\nu$-a.e. Then for all functions $f_j\in L^{p_j}(X,\mu) $ the limit in \eqref{tto1} exists and is finite $\nu$-a.e., and defines an $m$-linear operator   which uniquely extends $T$ defined on $D_1\times \cdots \times D_m$ and
which is bounded from $L^{p_1}\times \cdots \times L^{p_m}$ to $L^{q,\infty}(X)$. 
\end{prop}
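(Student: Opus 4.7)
The plan is to adapt the standard linearization argument for the Banach principle (as in \cite[Theorem 2.1.14]{GClassical}) to the $m$-linear setting. The key technical device is the oscillation functional
\[
E(\vec f\,)(y) := \varlimsup_{s,t\to 0^+} \big|T_s(\vec f\,)(y) - T_t(\vec f\,)(y)\big|,
\]
whose vanishing at $y$ is equivalent to the existence of $\lim_{t\to 0^+} T_t(\vec f\,)(y)$. The objective is therefore to show $E(\vec f\,)=0$ $\nu$-a.e.\ for every $\vec f\in L^{p_1}\times\cdots\times L^{p_m}$, after which the limit $T(\vec f\,)(y):=\lim_{t\to 0^+} T_t(\vec f\,)(y)$ serves as the required extension.

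First I would record two easy properties of $E$. The pointwise bound $E(\vec f\,)(y) \le 2\,T_\ast(\vec f\,)(y)$ combined with \eqref{100} shows that $E(\vec f\,)$ is finite $\nu$-a.e. Moreover, because each $T_t$ is $m$-linear, the triangle inequality applied before $\varlimsup$ yields subadditivity of $E$ in each slot. Hypothesis \eqref{tto1} then gives $E(\vec\varphi\,)=0$ $\nu$-a.e.\ whenever $\varphi_j\in D_j$.

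Next, given $\vec f\in\prod_j L^{p_j}$ and $\vec \varphi\in\prod_j D_j$, I would expand via the telescoping identity \eqref{ab}:
\[
T_s(\vec f\,) - T_s(\vec\varphi\,) = \sum_{i=1}^m T_s\big(\varphi_1,\dots,\varphi_{i-1},\,f_i-\varphi_i,\,f_{i+1},\dots,f_m\big),
\]
and similarly for $T_t$. Subtracting, taking absolute values, and then $\varlimsup_{s,t\to 0^+}$, using $E(\vec\varphi\,)=0$ together with the first step, one obtains
\[
E(\vec f\,)(y) \le 2\sum_{i=1}^m T_\ast\big(\varphi_1,\dots,\varphi_{i-1},\,f_i-\varphi_i,\,f_{i+1},\dots,f_m\big)(y).
\]
Taking $L^{q,\infty}$ quasi-norms and applying \eqref{100} summand by summand yields
\[
\|E(\vec f\,)\|_{L^{q,\infty}} \le 2\,C_q B\,\sum_{i=1}^m \|f_i-\varphi_i\|_{L^{p_i}} \prod_{j<i}\|\varphi_j\|_{L^{p_j}}\prod_{j>i}\|f_j\|_{L^{p_j}}.
\]
Choosing each $\varphi_j$ sufficiently close to $f_j$ in $L^{p_j}$ drives the right-hand side to zero, forcing $E(\vec f\,)=0$ $\nu$-a.e. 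Uniqueness of the extension, the bound $\|T(\vec f\,)\|_{L^{q,\infty}}\le B\prod_j \|f_j\|_{L^{p_j}}$, and $m$-linearity are then immediate from the pointwise majorization $|T(\vec f\,)|\le T_\ast(\vec f\,)$, hypothesis \eqref{100}, and the $m$-linearity of each $T_t$.

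The main obstacle is the case in which some $p_j=\infty$, where $L^\infty$-norm approximation by members of a ``dense'' $D_j$ is generally unavailable. I would handle this by freezing those coordinates and approximating only the finite-exponent factors; the telescoping expansion still works, now truncated to a sum over the indices with $p_j<\infty$, because choosing $\varphi_j=f_j$ kills the corresponding term. The constants $\|f_j\|_{L^\infty}$ then simply replace $\|f_j\|_{L^{p_j}}$ in the product on the right-hand side, and the conclusion is unchanged.
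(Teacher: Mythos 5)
Your core argument is the same as the paper's: introduce the oscillation of $t\mapsto T_t[\vec f\,](y)$ as $t\to 0$, observe that it is dominated pointwise by $2\,T_\ast[\vec f\,]$ and vanishes a.e.\ on $D_1\times\cdots\times D_m$ by \eqref{tto1}, expand $T_t[\vec f\,]-T_t[\vec \vp\,]$ via the telescoping identity \eqref{ab}, and then use the weak-type hypothesis \eqref{100} together with density to force the oscillation of a general $\vec f$ to vanish a.e.; whether one then estimates the measure of the superlevel sets $\{O_{\vec f}>\delta\}$ directly (as the paper does) or the $L^{q,\infty}$ quasi-norm of your functional $E(\vec f\,)$ (as you do) is an immaterial difference, and your concluding remarks on linearity, the bound via $|T[\vec f\,]|\le T_\ast[\vec f\,]$, and uniqueness are fine.

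The final paragraph, however, is both unnecessary and incorrect as written. It is unnecessary because density of $D_j$ in $L^{p_j}$ is a \emph{hypothesis} of the proposition for every $j$, including those with $p_j=\infty$; the genuine practical difficulty that $L^\infty$ has no convenient dense subspace is dealt with by the authors not inside Proposition~\ref{p3} but in the proof of Corollary~\ref{C2}, where the exponents are first lowered to finite $q_i<p_i$ (possible since the functions are truncated to have compact support) before the proposition is applied. It is incorrect because your proposed fix --- freezing $\vp_j=f_j$ in the slots with $p_j=\infty$ and telescoping only over the remaining slots --- leaves you needing the oscillation of the comparison vector $(\vp_1,\dots,f_j,\dots,\vp_m)$ to vanish a.e., where the frozen slot contains a general $L^\infty$ function not in $D_j$; hypothesis \eqref{tto1} gives a.e.\ convergence only when \emph{every} entry lies in the corresponding $D_j$, so this term is precisely of the type you are trying to control, and the argument becomes circular. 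Deleting that paragraph and simply invoking the assumed density in each $L^{p_j}$ leaves a complete proof that coincides with the paper's.
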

 
 \begin{proof} Given $f_j\in L^{p_j}$ we define the oscillation of $\vec{f}$ for $y\in Y$ by setting
\begin{equation*}
O_{\vec{f}}\, (y) = 
\limsup_{\varepsilon \to 0} \limsup_{\theta \to 0} \big|T_\varepsilon [\vec{f}\,](y) -  T_\theta[\vec{f}\,](y) \big|.
\end{equation*}
We will show that for all $f_j\in L^{p_j}$ and all $\delta>0$,
\begin{equation}\label{o0}
\nu\big(\{y\in Y\,:\,O_{\vec{f}}\, (y) > \delta\}\big) = 0.
\end{equation}
Once  \eqref{o0} is established, we obtain that $O_{\vec{f}}\, (y) = 0$ for $\nu$-almost all $y$, which implies that $T_t[\vec{f}\,](y)$ is Cauchy for $\nu$-almost all $y$, and it therefore converges $\nu$-a.e.~to some $T[\vec{f}\,](y)$ as $t\to 0$. The operator $T$ defined this way on $L^{p_1}(X)\times\dots\times L^{p_m}(X)$ is linear and extends $T$ given in \eqref{tto1} defined on $D_1\times \cdots \times D_m$.

To approximate $O_{\vec{f}}\, (y)$ we use density. Given $0<\eta<1$, we find $\vp_j\in D_j$ such that $\|f_j- \vp_j\|_{L^{p_j}}<\eta$, $j=1,\dots,m$. Without a loss of generality, we also assume that $\|\vp_i\|_{L^{p_i}} \leq 2\|f_i\|_{L^{p_i}}$. Since $T_t[\vec{\vp}\,] \to T[\vec{\vp}\,]$ $\nu$-a.e., it follows that $O_{\vec{\vp}}\, = 0$ $\nu$-a.e. Using \eqref{ab}, we write 
$$
T_t [\vec{f}\,]  - T_{t}[\vec{\vp}\,] = \sum_{i=1}^m T_t(\vp_1,\dots,\vp_{i-1}, f_i-\vp_i, f_{i+1},\dots,f_m)  
$$
and from this  we obtain  
$$
O_{\vec{f}}   \le O_{\vec{\vp}}   + \sum_{i=1}^m O_{(\vp_1,\dots,\vp_{i-1}, f_i-\vp_i ,f_{i+1},\dots,f_m)} \qquad \nu\text{-a.e.}
$$
Now, for any $\delta>0$ we have 
\begin{align*}
&\hspace{-.1in}\nu\big(\{y\in Y\,:\,O_{\vec{f}}\, (y) > \delta\}\big)\\ &\le \nu\Big(\Big\{y\in Y\,:\,\sum_{i=1}^m O_{(\vp_1,\dots,\vp_{i-1}, f_i-\vp_i ,f_{i+1},\dots,f_m)} > \delta\Big\}\Big) \\
&\le \nu\Big(\Big\{y\in Y\,:\,\sum_{i=1}^m 2T_\ast (\vp_1,\dots,\vp_{i-1}, f_i-\vp_i ,f_{i+1},\dots,f_m) > \delta\Big\}\Big)\\
&\le \sum_{i=1}^m \nu\Big(\Big\{y\in Y\,:\, 2T_\ast (\vp_1,\dots,\vp_{i-1}, f_i-\vp_i ,f_{i+1},\dots,f_m) > \f\delta m \Big\}\Big)\\
&\leq  \sum_{i=1}^m \Big[\Big(2B \f{m}{\delta}\Big) \|\vp_1\|_{L^{p_1}}\cdots \|\vp_{i-1}\|_{L^{p_{i-1}}} \|f_i-\vp_i\|_{L^{p_i}} \|f_{i+1}\|_{L^{p_{i+1}}}\cdots \|f_{m}\|_{L^{p_{m}}}\Big]^q\\
&\leq \Big(2^m B \f m \delta\Big)^q \eta^q \sum_{i=1}^m \big( \prod_{j\neq i} \|f_j\|^q_{L^{p_j}} \big).
\end{align*}
Letting $\eta\to 0$, we deduce \eqref{o0}. We conclude that $T_t[\vec{f}\, ]$ is a Cauchy sequence and hence it converges $\nu$-a.e.~to some $T[\vec{f}\,]$ which satisfies the claimed assertions. 
\end{proof}  

 We   now     prove Corollary~\ref{C2}
\begin{proof} 
It suffices to   prove the assertion  for almost all $x $ in a ball $N\, \mathbb{B}^n $, as  $\R^n$ is 
a countable union of balls. Let us fix   a ball $N\, \mathbb{B}^n $.   Then we 
  replace the given $f_i$ in $L^{p_i}_{loc}$ by 
$g_i=f_i\chi_{(N+1)\, \mathbb{B}^n }$ since   $S_{\a, t}(f_1, \dots , f_m)(x) =S_{\a, t}(g_1, \dots , g_m)(x)$ when $x\in N\, \mathbb{B}^n $
and $0<t<1$. As $g_i$ have compact support and lie in $L^{p_i}$, they also lie in $L^{q_i}$, where $q_i<p_i$ are chosen so that $\f{1}{q}=\sum_{i=1}^m \f{1}{q_i} <\f{mn-\a}{n}$. As $q_i<\infty$, the space of 
smooth functions with compact support is a dense 
subspace of $L^{q_i}$. Now \eqref{99} is easily shown to hold for smooth functions with 
compact support $f_i$, when $0\le \a\le 1$, thus \eqref{tto1} holds with 
$T_t=S_{\a ,t}$. Moreover \eqref{100} holds by Theorem~\ref{t2} if $\a<1$ or by \cite{D2019} if $\a=1$. 
By Proposition~\ref{p3}, for $t<1$, we obtain that  for almost all $x\in N\, \mathbb{B}^n $  we have   
\begin{align*} 
 \lim_{t\to 0}  S_{\a,t}^m (f_1,\dots, f_m) (x)   
=  \lim_{t\to 0}S_{\a,t}^m (g_1,\dots, g_m) (x)  
=   \prod_{j=1}^m g_j(x) 
=   \prod_{j=1}^m f_j(x),  
\end{align*}
thus   \eqref{99} holds for all   $g_i$ in $L^{q_i}$, in particular   for our given $f_i$ in $L^{p_i}_{loc}$.
\end{proof}

\bibliographystyle{amsplain}

\Addresses
\end{document}